\newtheorem{theorem}{Theorem}
\newcommand{\be}{\begin{equation}}
\newcommand{\ee}{\end{equation}}
\newcommand{\bea}{\begin{eqnarray}}
\newcommand{\eea}{\end{eqnarray}}
\begin{document}
\title{A lower bound for the scalar curvature of certain steady gradient Ricci solitons}
\author{Bennett Chow}
\author{Peng Lu}
\author{Bo Yang$^{\text{1}}$}
\maketitle

%\section{Introduction}

We have the following result regarding steady Ricci solitons. See
\cite{Brendle}, \cite{CaoChen}, \cite{GuoHongxin}, \cite{H3}, and
\cite{MontanoSesum} for some earlier works on the qualitative aspects of
steady Ricci solitons.\footnotetext[1]{Addresses. Bennett Chow and Bo Yang:
Math. Dept., UC San Diego; Peng Lu: Math. Dept., U of Oregon.}

\begin{theorem}
Let $\left(  \mathcal{M}^{n},g,f\right)  $ be a complete steady gradient Ricci
soliton with $R_{ij}=-\nabla_{i}\nabla_{j}f$ and $R+\left\vert \nabla
f\right\vert ^{2}=1$. If $\lim_{x\rightarrow\infty}f\left(  x\right)
=-\infty$ and $f\leq0$, then $R\geq\frac{1}{\sqrt{\frac{n}{2}}+2}e^{f}$.
\end{theorem}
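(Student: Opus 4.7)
The plan is to establish $R \geq ce^f$ by a maximum-principle argument for $\psi := R - ce^f$, where $c := 1/(\sqrt{n/2}+2)$. The first step is to compute the drift Laplacian $\Delta_f := \Delta - \langle\nabla f, \nabla\cdot\rangle$: the standard steady-soliton identity $\Delta_f R = -2|\mathrm{Ric}|^2$, together with $\Delta_f e^f = e^f\Delta f = -Re^f$ (using $\Delta f = -R$), gives
\[
\Delta_f\psi = -2|\mathrm{Ric}|^2 + cRe^f.
\]

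Next I will localize the infimum. By Chen's theorem $R \geq 0$, and the hypothesis $f\to -\infty$ at infinity gives $e^f\to 0$ there, so $\liminf_{x\to\infty}\psi(x) \geq 0$. If $\inf_M \psi < 0$, the infimum is therefore attained at some interior point $p$. At $p$, the condition $\nabla\psi(p)=0$ combined with the identity $\nabla R = 2\mathrm{Ric}(\nabla f)$ yields $\mathrm{Ric}(\nabla f)(p) = \frac{c}{2} e^{f(p)} \nabla f(p)$, so $\nabla f(p)$ (if nonzero) is an eigenvector of $\mathrm{Ric}(p)$ with eigenvalue $\lambda_1 = ce^{f(p)}/2$. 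The case $\nabla f(p)=0$ forces $R(p)=1$ via the soliton identity, contradicting $\psi(p)<0$ since $c<1$. The trace condition $\Delta\psi(p)\geq 0$ then gives $2|\mathrm{Ric}|^2(p)\leq cR(p) e^{f(p)}$.

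The third step is to couple these conditions with the sharpened Cauchy--Schwarz bound
\[
|\mathrm{Ric}|^2 \geq \lambda_1^2 + \frac{(R-\lambda_1)^2}{n-1},
\]
obtained by applying Cauchy--Schwarz to the $n-1$ eigenvalues of $\mathrm{Ric}(p)$ orthogonal to $\nabla f(p)$. Substituting $\lambda_1 = ce^{f(p)}/2$ and combining with the step-two inequality produces a quadratic inequality in $R(p)$ with parameter $ce^{f(p)}$. The specific constant $c = 1/(\sqrt{n/2}+2)$ (equivalently, $c\sqrt{n/2}+2c=1$) is the threshold value calibrated so that this quadratic, taken together with $R(p) < ce^{f(p)}$ from $\psi(p)<0$, becomes incompatible, yielding the desired contradiction.

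I anticipate the principal obstacle to be this final numerical closure: the trace-level analysis alone delivers only $R(p) \geq ce^{f(p)}/2$ at the minimum, off the target by a factor of two. To recover the sharp constant appears to require exploiting the full positive-semidefiniteness of $\nabla^2\psi(p)$ (not merely its trace), likely in combination with the second-order identity $\nabla_i\nabla_j R = 2\nabla_i R_{jk}\nabla_k f - 2R_{ik}R_{jk}$, so that the algebraic relation $c\sqrt{n/2} = 1 - 2c$ underlying the stated constant emerges from the precise balance of the resulting terms.
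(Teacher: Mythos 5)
Your proposal correctly sets up the drift-Laplacian framework ($\Delta_f R = -2|\operatorname{Rc}|^2$, $\Delta_f e^f = -Re^f$), correctly localizes a negative minimum of $\psi = R - ce^f$ using $R\geq 0$ and $e^f\to 0$ at infinity, and correctly diagnoses that the trace condition plus the refined Cauchy--Schwarz inequality yields only $R(p)\geq \tfrac{c}{2}e^{f(p)}$ at the minimum --- a factor of two short of the contradiction. But the step you defer ("exploiting the full positive-semidefiniteness of $\nabla^2\psi(p)$") is not a technical loose end; it cannot be closed along the route you describe. Every quantity entering your local analysis at $p$ --- $\nabla f$, $\operatorname{Rc}$, $R$, $|\nabla f|^2 = 1-R$, $\nabla^2 R$, and the product $c\,e^{f(p)}$ --- is invariant under the shift $f\mapsto f+a$, $c\mapsto ce^{-a}$, which preserves both soliton identities and carries a negative minimum of $R-ce^f$ to a negative minimum of $R-(ce^{-a})e^{f+a}$ at the same point with identical local data. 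So if your local conditions were contradictory for $c=\frac{1}{\sqrt{n/2}+2}$ they would be contradictory for every $c>0$; yet on the cigar with $f$ shifted upward by $a$, $R-ce^f=e^f(e^{-a}-c)$ attains a negative minimum whenever $c>e^{-a}$, and no contradiction can hold on an actual soliton. The hypothesis $f\leq 0$, without which the stated constant is false, enters your argument only in the degenerate case $\nabla f(p)=0$; it must enter the main case, and in your framework it has no way to.

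The paper closes exactly this gap by a different device: it perturbs the barrier to $ce^f+be^{2f}$. Using only the crude bound $|\operatorname{Rc}|^2\geq R^2/n$ (no eigenvalue decomposition) together with $\Delta_f(e^{2f})=2e^{2f}(1-2R)$, one gets $\Delta_f(R-ce^f-be^{2f})\leq\bigl(\tfrac{nc^2}{8}-2b+4bR\bigr)e^{2f}$. At a negative minimum $R<ce^f+be^{2f}\leq c+b$ --- this is precisely where $f\leq 0$ enters and the shift-invariance is broken, since $e^f$ and $e^{2f}$ rescale differently --- and the optimal choice $b=\tfrac{1-2c}{4}$ converts the maximum-principle inequality into $(1-2c)^2<\tfrac{n}{2}c^2$, which fails at $c=\frac{1}{\sqrt{n/2}+2}$. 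If you wish to salvage your approach, some mechanism comparing $e^{f(p)}$ with $1$ is indispensable; the second exponential in the barrier is the paper's way of supplying it.
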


\begin{proof}
Define $\Delta_{f}=\Delta-\nabla f\cdot\nabla$. Then $\Delta_{f}f=-1$,
$\Delta_{f}R=-2\left\vert \operatorname{Rc}\right\vert ^{2}\leq-\frac{2}%
{n}R^{2}$, and $\Delta_{f}(e^{f})=-R\,e^{f}$. For $c\in\mathbb{R}$,%
\[
\Delta_{f}\left(  R-ce^{f}\right)  \leq-\frac{2}{n}R^{2}+cR\,e^{f}\leq
\frac{nc^{2}}{8}e^{2f}.
\]
Using $\Delta_{f}(e^{2f})=2e^{2f}\left(  1-2R\right)  $, we compute for
$b\in\mathbb{R}$ that%
\begin{equation}
\Delta_{f}\left(  R-ce^{f}-be^{2f}\right)  \leq\left(  \frac{nc^{2}}%
{8}-2b+4bR\right)  e^{2f}.\label{Rumpelstiltskin}%
\end{equation}
Suppose $R-ce^{f}-be^{2f}$ is negative somewhere. Then, since $R\geq0$ by
\cite{ChenB} and $\lim_{x\rightarrow\infty}e^{f\left(  x\right)  }=0$ by
hypothesis, a negative minimum of $R-ce^{f}-be^{2f}$ is attained at some
point. By (\ref{Rumpelstiltskin}) and the maximum principle, at such a point
we have%
\[
0\leq\frac{nc^{2}}{8}-2b+4bR<\frac{nc^{2}}{8}-2b+4b\left(  c+b\right)
\]
since $f\leq0$. Given $c\in(0,\frac{1}{2}]$, the minimizing choice
$b=\frac{1-2c}{4}$ yields $\frac{\left(  1-2c\right)  ^{2}}{4}<\frac{nc^{2}%
}{8}$. We obtain a contradiction by choosing $c=\frac{1}{\sqrt{\frac{n}{2}}%
+2}$.
\end{proof}

\noindent\textbf{Remark.} \emph{Given }$O\in\mathcal{M}$, \emph{since}
$\left\vert \nabla f\right\vert \leq1$, \emph{we have }$f\left(  x\right)
\geq f(O)-d\left(  x,O\right)  $ \emph{on} $\mathcal{\dot{M}}$. \emph{For the
cigar soliton} $(\mathbb{R}^{2},\frac{4(dx^{2}+dy^{2})}{1+x^{2}+y^{2}})$
\emph{we have} $R=e^{f}$ \emph{assuming} $\max_{x\in\mathbb{R}^{2}}f\left(
x\right)  =0$. \emph{See} \cite{PengWu} \emph{for an estimate for the
potential functions of steady gradient Ricci solitons.}

\markright{A CONDITION FOR RICCI SHRINKERS TO HAVE POSITIVE AVR}

\end{document}